\newtheorem{theorem}{Theorem}[section]
\newtheorem{corollary}[theorem]{Corollary}
\newtheorem{proposition}[theorem]{Proposition}
\numberwithin{theorem}{section}
\numberwithin{equation}{section}
\renewcommand{\geq}{\geqslant}
\renewcommand{\leq}{\leqslant}
\begin{document}

\title{A generalization of Franklin's partition identity and a Beck-type companion identity}

\author{Gabriel Gray}
\address{University of Dayton}
\email{grayg1@udayton.edu}

\author{David Hovey}
\address{Princeton University}
\email{dh7965@princeton.edu}

\author{Brandt Kronholm}
\address{University of Texas Rio Grande Valley}
\email{brandt.kronholm@utrgv.edu}

\author{Emily Payne}
\address{University of Texas Rio Grande Valley}
\email{emily.payne01@utrgv.edu}

\author{Holly Swisher}
\address{Oregon State University}
\email{swisherh@oergonstate.edu}

\author{Ren Watson}
\address{University of Texas at Austin}
\email{renwatson@utexas.edu}

\thanks{The first, fourth, fifth, and sixth authors were supported by NSF grant DMS-2101906.}

\dedicatory{Dedicated to George Andrews and Bruce Berndt in celebration of their 85th birthdays.}

\keywords{Euler's partition identity, Glaisher's partition identity, Franklin's partition identity, Beck-type companion identities, combinatorial bijections}

\subjclass[2020]{11P84, 05A17, 05A19}

\begin{abstract}
Euler's classic partition identity states that the number of partitions of $n$ into odd parts equals the number of partitions of $n$ into distinct parts. We develop a new generalization of this identity, which yields a previous generalization of Franklin as a special case, and prove an accompanying Beck-type companion identity. 
\end{abstract}

\maketitle

\section{Introduction and Statement of Results}

A \textit{partition} of a positive integer $n$ is a finite non-increasing sequence of positive integers called \emph{parts} that sum to $n$.  We also define the empty partition to be the unique partition of $0$.  The partition function $p(n)$ counts the number of partitions of $n\geq 0$, and we write $p(n\mid *)$ for the function which counts the number of partitions of $n$ satisfying condition $*$. 

A fundamental partition identity due to Euler states that for any integer $n\geq 0$, the number of partitions of $n$ into odd parts is equal to the number of partitions of $n$ into distinct parts.  We can write this as
\begin{equation} \label{odd=distinct}
    p(n\mid\text{no part is divisible by $2$})=p(n\mid\text{no part appears $\geq 2$ times}).
\end{equation}
Glaisher's theorem \cite{glaisher1883} generalizes \eqref{odd=distinct} giving that for all $n\geq 0$ and $k \geq 1$, 
\begin{equation} \label{glaishersthm}
    p(n\mid\text{no part divisible by }k)=p(n\mid\text{no part appears }\geq k \text{ times}).
\end{equation}
Define the functions $O_{j,k}(n)$ to count the number of partitions of $n$ with exactly $j$ different parts divisible by $k$ (which can repeat) and $D_{j,k}(n)$ to count the number of partitions of $n$ with exactly $j$ different parts that appear at least $k$ times.  Then we can rewrite \eqref{odd=distinct} as $O_{0,2}(n) = D_{0,2}(n)$ and \eqref{glaishersthm} as $O_{0,k}(n) = D_{0,k}(n)$.

In 1883, Franklin \cite{franklin1883} further generalized \eqref{glaishersthm} by proving that for all $n,j\geq 0$ and $k \geq 1$, 
\begin{equation} \label{franklinseq}
    O_{j,k}(n)=D_{j,k}(n).
\end{equation}

Despite the fact that Euler's result dates to the middle of the $18^{\text{th}}$ century and those of Glaisher, and Franklin are well over one hundred years old, there is always more to discover.  For example, in 2022, Amdeberhan, Andrews, and Ballantine \cite{AAB} proved the interesting refinement $O_{1,k}^{(u)}(n) = D_{1,k}^{(u)}(n)$, where $O_{1,k}^{(u)}(n)$ is the number of partitions counted by $O_{1,k}^{(u)}(n)$ such that the one part divisible by $k$ is repeated exactly $u$ times, and $D_{1,k}^{(u)}(n)$ is the number of partitions counted by $D_{1,k}^{(u)}(n)$ such that the one part repeated at least $k$ times is $u$.  We note that the following identity, discovered recently by the second author, can also be considered as a generalization of Euler's and Glaisher's identities.  For all integers $n\geq 0$, $k \geq 1$, and $b \geq 1$,
\begin{multline} \label{hoveysthm}
    p(n\mid\text{no part is divisible by }kb)\\=p(n\mid\text{no part is both divisible by }b \text{ and appears }\geq k \text{ times}).
\end{multline}
Then \eqref{glaishersthm} follows from \eqref{hoveysthm} by setting $b=1$. 

In an email correspondence, with the third author, George Andrews remarked {\em ``...I do not remember seeing this before. ... I believe Grahm Lord (PhD, Temple, early 1970s) had a paper containing many theorems of this nature; unfortunately, I have not been able to track it down."} \cite{AndrewsKronholmemail}

We establish identity \eqref{hoveysthm} from the associated generating functions.  Define the $q$-Pochhammer symbol by $(a;q)_n = (1-a)(1-q)(1-q^2)\cdots (1-q^{n-1})$, where $n\leq \infty$.  Then since 
\[
\prod_{n\geq 0} \frac{1}{(1-q^{mn+1})(1-q^{mn+2})\cdots (1-q^{mn+m-1})} = \frac{(q^m;q^m)_\infty}{(q;q)_\infty} = \prod_{n\geq 1} (1+q^n + q^{2n} + \cdots q^{(m-1)n}),
\]
we can interpret $\frac{(q^m;q^m)_\infty}{(q;q)_\infty}$ as generating partitions into parts not divisible by $m$, and also as generating partitions where each part appears at most $m-1$ times.  Thus \eqref{hoveysthm} follows from the fact that 
\[
\frac{(q^{kb};q^{kb})_\infty}{(q;q)_\infty} = \frac{(q^{b};q^{b})_\infty}{(q;q)_\infty}  \cdot \frac{(q^{kb};q^{kb})_\infty}{(q^b;q^b)_\infty}, 
\]
since the right hand side generates partitions where parts not divisible by $b$ are unrestricted, and parts divisible by $b$ appear at most $k-1$ times.
 
We can write (\ref{odd=distinct}), (\ref{glaishersthm}), (\ref{franklinseq}), and (\ref{hoveysthm}) in a more uniform notation by introducing the parameter $b$.  Let $O_{j,k,b}(n)$ count the number of partitions of $n$ with exactly $j$ parts divisible by $kb$ (repetitions allowed) and $D_{j,k,b}(n)$ count the number of partitions of $n$ with exactly $j$ parts that are both divisible by $b$ and appear $\geq k$ times. We also let $\mathcal{O}_{j,k,b}(n)$ and $\mathcal{D}_{j,k,b}(n)$ denote the set of all partitions counted by $O_{j,k,b}(n)$ and $D_{j,k,b}(n)$, respectively, so that $$O_{j,k,b}(n)=|\mathcal{O}_{j,k,b}(n)|~ \text{ and } ~D_{j,k,b}(n)=|\mathcal{D}_{j,k,b}(n)|.$$

With this notation \eqref{glaishersthm} becomes the statement that $O_{j,k,1}(n)=D_{j,k,1}(n)$ and \eqref{hoveysthm} becomes the statement $O_{0,k,b}(n)=D_{0,k,b}(n)$.  Thus it is natural to ask whether the equality holds for all choices of $j,k,b,n$.  We show in our first result that this is indeed the case. 

\begin{theorem} \label{OD24}
    For all integers $n,j \geqslant 0$, and $k,b \geqslant 1$, $$O_{j,k,b}(n)=D_{j,k,b}(n).$$
\end{theorem}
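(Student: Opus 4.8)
The plan is to pass to two-variable generating functions, using $q$ to record the size $n$ and an auxiliary variable $z$ to record the statistic $j$, and then to verify the resulting product identity by a short factorization. Viewing a partition as a choice of multiplicity for each part size $m$, I would record
\[
\sum_{n,j\geq 0} O_{j,k,b}(n)\,z^j q^n=\prod_{kb\nmid m}\frac{1}{1-q^m}\prod_{kb\mid m}\Bigl(1+\frac{zq^m}{1-q^m}\Bigr),
\]
since a part not divisible by $kb$ is unconstrained and neutral for $j$, while a part divisible by $kb$ either is absent (the term $1$) or occurs, contributing one to $j$ and hence one factor of $z$. Likewise, splitting the multiplicity $c$ of a $b$-divisible part into the ranges $0\leq c\leq k-1$ and $c\geq k$ gives
\[
\sum_{n,j\geq 0} D_{j,k,b}(n)\,z^j q^n=\prod_{b\nmid m}\frac{1}{1-q^m}\prod_{b\mid m}\frac{1-q^{km}+zq^{km}}{1-q^m}.
\]
Theorem~\ref{OD24} is then precisely the assertion that these two products agree identically in $z$ and $q$.

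The main computation is to match the two products. First I would note that the parameter $b$ decouples: every part size with $b\nmid m$ contributes the same free factor $(1-q^m)^{-1}$ to both sides and is neutral for $j$, so those factors cancel. Substituting $m=bt$ and $Q=q^b$ turns what remains into products over $t\geq 1$ governed only by whether $k\mid t$. On the $O$-side one is left with $\prod_{k\nmid t}(1-Q^t)^{-1}\prod_{s\geq1}\frac{1-Q^{ks}+zQ^{ks}}{1-Q^{ks}}$, writing $t=ks$ when $k\mid t$; on the $D$-side, factoring $\prod_{t\geq1}(1-Q^t)=\prod_{k\nmid t}(1-Q^t)\prod_{s\geq1}(1-Q^{ks})$ in the denominator and reindexing the numerator as $\prod_{t\geq1}(1-Q^{kt}+zQ^{kt})=\prod_{s\geq1}(1-Q^{ks}+zQ^{ks})$ yields the identical product. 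This proves the identity, and specializing $b=1$ simultaneously reproves Franklin's \eqref{franklinseq}, exhibiting it as the case in which there are no inert parts to cancel.

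I expect the genuine work to lie not in the algebra, which is short, but in \emph{justifying the encodings}: one must verify that $z$ tracks exactly the number of distinct parts of the prescribed type rather than total multiplicity, that the defining conditions of $\mathcal{O}_{j,k,b}(n)$ and $\mathcal{D}_{j,k,b}(n)$ translate faithfully factor by factor, and that the infinite products are handled purely as formal power series in $q$, with each coefficient depending on only finitely many factors. A second, more combinatorial route, and likely the one needed to produce the accompanying Beck-type companion, is to construct an explicit Glaisher-type bijection $\mathcal{O}_{j,k,b}(n)\to\mathcal{D}_{j,k,b}(n)$: fix the parts not divisible by $b$, and on the $b$-divisible parts (scaled by $b$) repeatedly collapse $k$ equal parts into one part $k$ times as large, reducing once more to Franklin's bijection. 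The delicate point there is to show this transformation carries the statistic ``number of distinct parts divisible by $kb$'' to ``number of distinct $b$-divisible parts occurring at least $k$ times'' \emph{exactly}, which amounts to a weight-preserving bijection between multiplicity sequences respecting a base-$k$ carrying rule; making that correspondence canonical is where I would expect the main difficulty.
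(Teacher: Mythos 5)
Your proposal is correct and matches the paper's first proof essentially verbatim: both pass to the two-variable generating functions in $z$ and $q$, observe that each side simplifies to $((1-z)q^{kb};q^{kb})_\infty/(q;q)_\infty$ (your products over part sizes are just this identity written without Pochhammer notation), and conclude by comparing coefficients. Your closing sketch of a Glaisher-type bijection fixing the parts not divisible by $b$ is also precisely the second proof the paper gives.
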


In 2017, George Beck \cite{beck2017oeis} conjectured an interesting identity related to Euler's identity $O_{0,2}(n) = D_{0,2}(n)$.   He conjectured that for any positive integer $n$, if you count the total number of parts over all partitions counted by $O_{0,2}(n)$, and then subtract the total number of parts over all partitions counted by $D_{0,2}(n)$, that you will get $O_{1,2}(n)=D_{1,2}(n)$.  Thus $O_{1,2}(n)=D_{1,2}(n)$ could be considered to count the excess in the total number of parts of $O_{0,2}(n)$ over $D_{0,2}(n)$.

For example, the partitions counted by $O_{0,2}(5)$ are $5, 3+1+1, 1+1+1+1+1$ which have $9$ total parts.  And the partitions counted by $D_{0,2}(5)$ are $5, 4+1, 3+2$ which have $5$ total parts.  Then $9-5=4=O_{1,2}(5)$, as there are four partitions counted by $O_{1,2}(5)$, namely $4+1, 3+2, 2+2+1, 2+1+1+1$. 

We use the following notation to describe this phenomenon.  Let $\ell(\pi)$ count the number of parts in the partition $\pi$.  Given two partition counting functions, $a(n)=p(n\mid *)$ and $b(n)=p(n\mid \heartsuit)$, with $A(n)$ and $B(n)$ the sets of partitions counted by $a(n)$ and $b(n)$, respectively.  Define 
\begin{equation}\label{def:E(a,b)}
E(a(n),b(n)) = \sum_{\pi \in A(n)} \!\!\! \ell(\pi) - \!\!\! \sum_{\pi \in B(n)} \!\!\! \ell(\pi),
\end{equation}
so that $E(a(n),b(n))$ counts excess in the total number of parts in all partitions counted by $a(n)$ over the total number of parts in all partitions counted by $b(n)$.  Then Beck's conjecture states that for all $n\geq 1$,
\begin{equation}\label{Beck1}
E(O_{0,2}(n),D_{0,2}(n)) = O_{1,2}(n).
\end{equation}
This striking conjecture was proved by Andrews \cite[Thm. 1]{andrews2017eulers} later that year.  

Beck also conjectured a second identity which was proved by Andrews \cite[Thm. 2]{andrews2017eulers}.  If we define $\bar{\ell}(\pi)$ to count the number of different parts occurring in the partition $\pi$, then Andrews proved that for all $n\geq 1$,
\[
\sum_{\pi \in \mathcal{D}_{0,2}(n)} \!\!\! \ell(\pi) - \!\!\! \sum_{\pi \in \mathcal{O}_{0,2}(n)} \!\!\! \bar{\ell}(\pi) = D_{1,3}(n) = O_{1,3}(n).
\]

Given a partition identity $a(n)=b(n)$, an identity which counts the excess in number of parts (perhaps with certain conditions) for $a(n)$ over $b(n)$ is now called a Beck-type companion identity.  Beck-type identities have also been established for different types of identities, for example Ballantine and Folsom \cite{BallantineFolsom} established Beck-type identities for the Rogers-Ramanujan functions.

Fu and Tang \cite[Thm. 1.5]{futang2017generalizing} generalized \eqref{Beck1} as follows.  Define $\ell_{m,n}(\pi)$ to count the number of parts $\equiv n \pmod{m}$ in $\pi$.  For all $n \geq 0$ and $k \geq 2$, they prove that
\begin{equation*}
\sum_{\pi \in \mathcal{O}_{0,k}(n)} \!\!\! \ell_{k,1}(\pi) - \!\!\! \sum_{\pi \in \mathcal{D}_{0,k}(n)} \!\!\! \bar{\ell}(\pi) = O_{1,k}(n) = D_{1,k}(n).
\end{equation*}
When $k=2$, all parts in $\pi \in \mathcal{O}_{0,2}(n)$ are odd so are counted, and all parts in $\pi \in \mathcal{D}_{0,2}(n)$ are distinct so are counted.  Thus this recovers \eqref{Beck1}.

 
In 2021, Ballantine and Welch \cite[Thm. 3]{ballantinewelch2021becktype} proved a more general Beck-type companion identity for Franklin's identity \eqref{franklinseq}. They prove that for all $n\geq 1$, $j\geq 0$, and $k \geq 2$, 
\begin{equation} \label{bwelchbeckid}
    \frac{1}{k-1} E(O_{j,k}(n),D_{j,k}(n))=(j+1)O_{j+1,k}(n)-jO_{j,k}(n)=(j+1)D_{j+1,k}(n)-jD_{j,k}(n).
\end{equation} 
When $j=0$ this fully generalizes \eqref{Beck1} for arbitrary $k\geq 2$.  Ballantine and Welch \cite{ballantinewelch2021becktype} observe that when $j\geq 1$, the right hand side of \eqref{bwelchbeckid} is nonpositive.  However, an immediate corollary follows from \eqref{bwelchbeckid} for which this is not the case.  Define $O_{\leq j,k}(n)=\sum_{i=0}^j O_{i,k}(n)$ which counts the number of partitions of $n$ with at most $j$ parts divisible by $k$, and $D_{\leq j,k}(n)=\sum_{i=0}^j D_{i,k}(n)$ which counts the number of partitions of $n$ with at most $j$ parts that occur at least $k$ times.  Then $E(O_{\leq j,k}(n), D_{\leq j,k}(n)) = \sum_{i=0}^j E(O_{i,k}(n),D_{i,k}(n))$, so from \eqref{bwelchbeckid} it follows that for all $n\geq 1$, $j\geq 0$, $k\geq 2$,
\begin{equation} \label{bwelchcor}
E(O_{\leq j,k}(n), D_{\leq j,k}(n)) = (k-1)(j+1)O_{j+1,k}(n).
\end{equation} 

Our next result is a Beck-type companion identity for Theorem \ref{OD24} that generalizes \eqref{bwelchbeckid}. 

\begin{theorem} \label{Ojkb/Djkb-beck}
For all $n,b \geq 1$, $j\geq 0$, and $k \geq 2$, 
\[
\frac{1}{k-1}E(O_{j,k,b}(n),D_{j,k,b}(n))=(j+1)O_{j+1,k,b}(n)-jO_{j,k,b}(n)=(j+1)D_{j+1,k,b}(n)-jD_{j,k,b}(n).
\]
\end{theorem}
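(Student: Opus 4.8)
The plan is to prove the theorem by a two-variable generating function computation, introducing one variable $z$ to track the statistic $j$ and a second variable $t$ to track the total number of parts $\ell(\pi)$. Building on the product interpretations used to establish \eqref{hoveysthm} and Theorem \ref{OD24}, I would record the bivariate generating functions
\[
F_O(z,t,q)=\prod_{kb\nmid m}\frac{1}{1-tq^m}\prod_{kb\mid m}\left(1+\frac{ztq^m}{1-tq^m}\right),\qquad
F_D(z,t,q)=\prod_{b\nmid m}\frac{1}{1-tq^m}\prod_{b\mid m}\frac{1-(1-z)(tq^m)^k}{1-tq^m},
\]
in which $z$ marks a distinct part divisible by $kb$ (for $O$), or a distinct part divisible by $b$ occurring at least $k$ times (for $D$), while each part-occurrence carries a factor of $t$. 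Then $[z^jq^n]F_O=O_{j,k,b}(n)$ and $[z^jq^n]F_D=D_{j,k,b}(n)$, whereas $\partial_t F_O|_{t=1}$ and $\partial_t F_D|_{t=1}$ are the generating functions whose $[z^jq^n]$ coefficients are $\sum_{\pi\in\mathcal{O}_{j,k,b}(n)}\ell(\pi)$ and $\sum_{\pi\in\mathcal{D}_{j,k,b}(n)}\ell(\pi)$, respectively.

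Setting $t=1$ and invoking Theorem \ref{OD24} gives $F_O(z,1,q)=F_D(z,1,q)=:F(z,q)$, so the excess generating function is $\sum_{j,n}E(O_{j,k,b}(n),D_{j,k,b}(n))z^jq^n=\partial_t(F_O-F_D)|_{t=1}$. Since $\partial_zF$ and $z\partial_zF$ extract the coefficients $(j+1)O_{j+1,k,b}(n)$ and $jO_{j,k,b}(n)$ (and likewise with $D$), the entire theorem reduces to the single generating function identity
\[
\partial_t\big(F_O-F_D\big)\big|_{t=1}=(k-1)(1-z)\,\partial_z F.
\]
Once this is established, reading off the coefficient of $z^jq^n$ gives the middle expression of the theorem, and reading the same coefficient off the $D$-form of $F$ gives the right-hand expression; this is where Theorem \ref{OD24} is used a second time to identify the two forms of $F$. (Specializing $b=1$ recovers the Ballantine--Welch identity \eqref{bwelchbeckid}.)

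To prove the displayed identity I would take logarithmic derivatives, writing each of $\partial_t F_O|_{t=1}$, $\partial_t F_D|_{t=1}$, and $\partial_z F$ as $F$ times an explicit sum over $m$. A direct computation gives
\[
\partial_t F_O|_{t=1}=F\Big[\sum_{m\geq1}\tfrac{q^m}{1-q^m}-\sum_{kb\mid m}\tfrac{(1-z)q^m}{1-(1-z)q^m}\Big],\qquad
\partial_t F_D|_{t=1}=F\Big[\sum_{m\geq1}\tfrac{q^m}{1-q^m}-k\sum_{b\mid m}\tfrac{(1-z)q^{km}}{1-(1-z)q^{km}}\Big],
\]
while $(1-z)\partial_z F=F\sum_{b\mid m}\tfrac{(1-z)q^{km}}{1-(1-z)q^{km}}$. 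The crux of the argument, and the step I expect to require the most care, is the observation that the two sums $\sum_{kb\mid m}\tfrac{(1-z)q^m}{1-(1-z)q^m}$ and $\sum_{b\mid m}\tfrac{(1-z)q^{km}}{1-(1-z)q^{km}}$ in fact coincide: reindexing $m=kbi$ in the first and $m=bi$ in the second shows both equal $S:=\sum_{i\geq1}\tfrac{(1-z)q^{kbi}}{1-(1-z)q^{kbi}}$. This is precisely the identification between ``parts divisible by $kb$'' and ``parts divisible by $b$ counted with their $k$-fold weight'' that underlies Theorem \ref{OD24} itself. Granting this coincidence, the difference of the two $t$-derivatives telescopes to $F(-S+kS)=(k-1)FS=(k-1)(1-z)\partial_z F$, which is the desired identity and completes the proof.
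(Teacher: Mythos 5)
Your proposal is correct and is essentially the paper's own first proof of Theorem \ref{Ojkb/Djkb-beck}: the same three-variable generating functions (your $t$ is the paper's $w$), the same logarithmic differentiation at $t=1$, and the same reduction to $\partial_t(F_O-F_D)|_{t=1}=(k-1)(1-z)\partial_z F$ followed by coefficient comparison. The only cosmetic difference is that you split the logarithmic derivative as a difference of two sums and reindex to a common $S$, whereas the paper pairs numerator and denominator factors before differentiating; the resulting expressions are identical.
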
 

From Theorem \ref{OD24} we obtain the following immediate corollary which generalizes \eqref{bwelchcor}, where $O_{\leq j,k,b}(n) = \sum_{i=0}^j O_{i,k,b}(n)$ and $D_{\leq j,k,b}(n) = \sum_{i=0}^j D_{i,k,b}(n)$.

\begin{corollary} \label{corollary}
For all $n\geq 1$, $j\geq 0$, and $k,b\geq 1$,
\[
E(O_{\leq j,k,b}(n), D_{\leq j,k,b}(n)) = (k-1)(j+1)O_{j+1,k,b}(n).
\]
\end{corollary}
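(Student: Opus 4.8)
The plan is to reduce the corollary to the Beck-type identity of Theorem \ref{Ojkb/Djkb-beck} by exploiting the additivity of the excess functional $E$ together with a telescoping sum, exactly mirroring how \eqref{bwelchcor} is deduced from \eqref{bwelchbeckid}.

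First I would record the additivity step. Since every partition of $n$ has a well-defined number of distinct parts divisible by $kb$, the set $\mathcal{O}_{\leq j,k,b}(n)$ decomposes as the disjoint union $\bigsqcup_{i=0}^j \mathcal{O}_{i,k,b}(n)$, and likewise $\mathcal{D}_{\leq j,k,b}(n) = \bigsqcup_{i=0}^j \mathcal{D}_{i,k,b}(n)$. Summing $\ell(\pi)$ over each side of these disjoint unions and invoking the definition \eqref{def:E(a,b)} gives
\[
E(O_{\leq j,k,b}(n), D_{\leq j,k,b}(n)) = \sum_{i=0}^j E(O_{i,k,b}(n), D_{i,k,b}(n)).
\]

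For $k \geq 2$ I would then substitute Theorem \ref{Ojkb/Djkb-beck}, which gives $E(O_{i,k,b}(n), D_{i,k,b}(n)) = (k-1)\bigl[(i+1)O_{i+1,k,b}(n) - i\,O_{i,k,b}(n)\bigr]$ for each $i$. Writing $f(i) = i\,O_{i,k,b}(n)$, the bracketed quantity is exactly $f(i+1) - f(i)$, so the sum telescopes to $(k-1)\bigl[f(j+1) - f(0)\bigr] = (k-1)(j+1)O_{j+1,k,b}(n)$, which is the claimed formula.

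The only point requiring separate care — and hence the main obstacle, modest as it is — is the boundary case $k=1$, which the corollary permits but which Theorem \ref{Ojkb/Djkb-beck} excludes. Here the right-hand side vanishes because of the factor $(k-1)$, so I must check that the left-hand side vanishes as well. This is immediate once one observes that when $k=1$ the conditions defining $\mathcal{O}_{i,1,b}(n)$ and $\mathcal{D}_{i,1,b}(n)$ coincide: a part "divisible by $b$" and a part "divisible by $b$ and appearing at least once" describe the same distinct parts. Thus $\mathcal{O}_{i,1,b}(n) = \mathcal{D}_{i,1,b}(n)$ as sets, each term $E(O_{i,1,b}(n), D_{i,1,b}(n))$ is $0$, and the additivity step forces $E(O_{\leq j,1,b}(n), D_{\leq j,1,b}(n)) = 0 = (1-1)(j+1)O_{j+1,1,b}(n)$. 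Since the argument is otherwise a direct telescoping of an already-established identity, I expect no substantive difficulty beyond keeping this edge case in view.
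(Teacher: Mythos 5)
Your proposal is correct and follows essentially the same route the paper takes: the corollary is obtained by decomposing $E(O_{\leq j,k,b}(n), D_{\leq j,k,b}(n))$ as $\sum_{i=0}^j E(O_{i,k,b}(n),D_{i,k,b}(n))$ and telescoping via Theorem \ref{Ojkb/Djkb-beck}, exactly as \eqref{bwelchcor} is deduced from \eqref{bwelchbeckid}. Your explicit treatment of the $k=1$ boundary case (where $\mathcal{O}_{i,1,b}(n)=\mathcal{D}_{i,1,b}(n)$ and both sides vanish) is a welcome addition that the paper leaves implicit.
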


We now outline the remainder of this paper. In Section \ref{sec:FHgen}, we provide two proofs of Theorem \ref{OD24}, one via generating functions and the other via a combinatorial bijection.  In Section \ref{sec:Beck} we provide two proofs of Theorem \ref{Ojkb/Djkb-beck}, one directly using differentiation of generating functions and the other using a refinement approach of Ballantine and Welch \cite[Thm. 4]{ballantinewelch2021becktype}.  

\section{Generating Functions} 

Here we observe a few of the fundamental building blocks that we will need in the construction of our generating functions.  First, we observe that
\begin{equation}\label{eq:ndivtotal}
\frac{1}{(wq^n;q^n)_\infty} \mbox{ and } \frac{(wq^n;q^n)_\infty}{(wq;q)_\infty}
\end{equation}
generates partitions into parts $\equiv 0 \pmod{n}$ and $\not\equiv 0 \pmod{n}$, respectively, with the power of $q$ tracking the size of the partition, and the power of $w$ tracking the total number of parts appearing in these partitions.  Letting $w=1$ generates the same without tracking the number of parts.

Meanwhile, we can interpret
\begin{equation}\label{eq:divdiff}
\frac{((1-z)wq^n;q^n)_\infty}{(wq^n;q^n)_\infty} = \prod_{m\geq 1} \left( 1+\frac{zwq^{nm}}{1-wq^{nm}} \right) = \prod_{m\geq 1} (1+zwq^{nm}+zw^2q^{2nm}+zw^3q^{3nm}+\cdots)
\end{equation}
as generating partitions into parts $\equiv 0 \pmod{n}$, with the power of $q$ tracking the size of the partitions, the power of $z$ tracking the number of different parts appearing in these partitions, and the power of $w$ tracking the total number of parts.  Again, letting $w=1$ generates the same without tracking the total number of parts.

Since
\begin{equation*}\label{eq:q-intobs}
(1-wq^n)(1+wq^n+w^2q^{2n}+\cdots +w^{k-1}q^{(k-1)n}) = 1-w^kq^{kn},
\end{equation*}
it follows that 
\begin{multline*}\label{eq:obs2}
\frac{1-(1-z)w^kq^{kn}}{1-wq^n} =  \frac{1-w^kq^{kn}}{1-wq^n} \cdot \frac{1-(1-z)w^kq^{kn}}{1-w^kq^{kn}} 
= \frac{1-w^kq^{kn}}{1-wq^n} \left( 1+\frac{zw^kq^{kn}}{1-w^kq^{kn}} \right) \\
= (1+wq^n+w^2q^{2n}+\cdots +w^{k-1}q^{(k-1)n})\cdot(1+zw^kq^{kn}+zw^{2k}q^{2kn}+zw^{3k}q^{3kn}+\cdots) \\
= 1+wq^n+w^2q^{2n}+\cdots +w^{k-1}q^{(k-1)n} +zw^kq^{kn}+zw^{k+1}q^{(k+1)n}+zw^{k+2}q^{(k+2)n}+\cdots .
\end{multline*}
Thus we can interpret
\begin{equation}\label{eq:ktimes}
\frac{((1-z)w^kq^{kn};q^{kn})_\infty}{(wq^n;q^n)_\infty}
\end{equation}
as generating partitions into parts $\equiv 0 \pmod{n}$, with the power of $q$ tracking the size of the partitions, the power of $z$ tracking the number of different parts occurring at least $k$ times in these partitions, and the power of $w$ tracking the total number of parts.  Again, letting $w=1$ generates the same without tracking the total number of parts.

We can now quickly deduce Theorem \ref{OD24} by showing that $O_{j,k,b}(n)$ and $D_{j,k,b}(n)$ have the same generating functions.

\begin{proof}[Proof of Theorem \ref{OD24} via generating functions]

By the definition of $O_{j,k,b}(n)$, it follows from \eqref{eq:ndivtotal} and \eqref{eq:divdiff} that
\begin{equation}\label{eq:Ozq}
\sum_{n,j \geq 0} O_{j,k,b}(n)z^j q^n = \frac{(q^{kb};q^{kb})_\infty}{(q;q)_\infty} \cdot \frac{((1-z)q^{kb};q^{kb})_\infty}{(q^{kb};q^{kb})_\infty} = \frac{((1-z)q^{kb};q^{kb})_\infty}{(q;q)_\infty}.
\end{equation}

Moreover, by the definition of $D_{j,k,b}(n)$, it follows from \eqref{eq:ndivtotal} and \eqref{eq:ktimes} that
\begin{equation}\label{eq:Dzq}
\sum_{n,j \geq 0} D_{j,k,b}(n)z^j q^n = \frac{(q^b;q^b)_\infty}{(q;q)_\infty} \cdot  \frac{((1-z)q^{kb};q^{kb})_\infty}{(q^b;q^b)_\infty} = \frac{(1-z)q^{kb};q^{kb})_\infty}{(q;q)_\infty},
\end{equation}
so the two generating functions are equal.
\end{proof}

A related generating function that will be useful is obtained as follows.  We can interpret the coefficient of $z^jq^n$ in 
\begin{equation}\label{(j+1)Oint}
\frac{(q^{kb};q^{kb})_\infty}{(q;q)_\infty} \sum_{i\geq 1} \frac{q^{kbi}}{1-q^{kbi}}  \prod_{\substack{m\geq 1 \\ m\neq i}}\frac{1-(1-z)q^{kbm}}{1-q^{kbm}}
\end{equation} 
as $j+1$ times the number of partitions of $n$ consisting of exactly $j+1$ different parts that are divisible by $kb$, since the sum will equally count the partitions for each of the $j+1$ parts that are divisible by $kb$.  Thus, \eqref{(j+1)Oint} gives a generating function for $(j+1)O_{j+1,k,b}(n)$.  Moreover,
\[
\frac{(q^{kb};q^{kb})_\infty}{(q;q)_\infty} \sum_{i\geq 1} \frac{q^{kbi}}{1-q^{kbi}}  \prod_{\substack{m\geq 1 \\ m\neq i}}\frac{1-(1-z)q^{kbm}}{1-q^{kbm}} 
= \frac{((1-z)q^{kb};q^{kb})_\infty}{(q;q)_\infty} \sum_{i\geq 1} \frac{q^{kbi}}{1-(1-z)q^{kbi}},  
\]
So we have that
\begin{align}\label{(j+1)Ogen}
\sum_{n,j \geq 0} (j+1)O_{j+1,k,b}(n)z^j q^n 
&= \frac{((1-z)q^{kb};q^{kb})_\infty}{(q;q)_\infty} \sum_{i\geq 1} \frac{q^{kbi}}{1-(1-z)q^{kbi}} \\
&= \frac{(q^{kb};q^{kb})_\infty}{(q;q)_\infty} \sum_{i\geq 1} \frac{q^{kbi}}{1-q^{kbi}}  \prod_{\substack{m\geq 1 \\ m\neq i}}\frac{1-(1-z)q^{kbm}}{1-q^{kbm}}. \nonumber
\end{align} 

We will also need a few three variable generating functions.  Define $O_{j,k,b}(m,n)$ to be the number of partitions counted by $O_{j,k,b}(n)$ that have $m$ parts.  Let $D_{j,k,b}(m,n)$ be the number of partitions counted by $D_{j,k,b}(n)$ that have $m$ parts.  Then from \eqref{eq:ndivtotal}, \eqref{eq:divdiff}, and \eqref{eq:ktimes}, we have

\begin{align}
\sum_{n,j,m \geq 0} O_{j,k,b}(m,n)z^j w^m q^n & = \frac{(wq^{kb};q^{kb})_\infty}{(wq;q)_\infty} \cdot \frac{((1-z)wq^{kb};q^{kb})_\infty}{(wq^{kb};q^{kb})_\infty} = \frac{((1-z)wq^{kb};q^{kb})_\infty}{(wq;q)_\infty}, \label{eq:Ozwq} \\
\sum_{n,j,m \geq 0} D_{j,k,b}(m,n)z^j w^m q^n & = \frac{(wq^b;q^b)_\infty}{(wq;q)_\infty} \cdot  \frac{((1-z)w^kq^{kb};q^{kb})_\infty}{(wq^b;q^b)_\infty} = \frac{((1-z)w^kq^{kb};q^{kb})_\infty}{(wq;q)_\infty}. \label{eq:Dzwq}
\end{align}

In a different vein, when $0\leq t \leq k-1$, define $O_{j,k,b,t}(m,n)$ to be the number of partitions counted by $O_{j,k,b}(n)$ that have $m$ total parts $\equiv tb \pmod{bk}$.  Thus when $1\leq t \leq k-1$, so that $tb\not\equiv 0 \pmod{kb}$, it follows from \eqref{eq:ndivtotal} and \eqref{eq:divdiff} that

\begin{multline}\label{eq:Ozwq-t}
\sum_{n,j,m \geq 0} O_{j,k,b,t}(m,n)z^j w^m q^n = \frac{((1-z)q^{kb};q^{kb})_\infty}{(q^{kb};q^{kb})_\infty} \cdot \frac{(q^{kb};q^{kb})_\infty (q^{tb};q^{kb})_\infty}{(q;q)_\infty} \cdot \frac{1}{(wq^{tb};q^{kb})_\infty} \\
= \frac{((1-z)q^{kb};q^{kb})_\infty(q^{tb};q^{kb})_\infty}{(q;q)_\infty(wq^{tb};q^{kb})_\infty}.
\end{multline}
And when $t=0$, it follows from \eqref{eq:ndivtotal} and \eqref{eq:divdiff} that
\begin{equation}\label{eq:Ozwq-0}
\sum_{n,j,m \geq 0} O_{j,k,b,0}(m,n)z^j w^m q^n =  \frac{((1-z)wq^{kb};q^{kb})_\infty(q^{kb};q^{kb})_\infty }{(wq^{kb};q^{kb})_\infty(q;q)_\infty}.
\end{equation}

For our last generating function we need the following definitions.  For a partition $\pi$ we define $s_i(\pi)$ to be the \emph{multiplicity} of the part $i$ in $\pi$, i.e., the number of occurrences of $i$ in $\pi$.  And for a fixed positive integer $k$, we define $r_i(\pi)$ to be the residue of $s_i(\pi)$ modulo $k$, and call $r_i(\pi)$ the \emph{residual multiplicity modulo $k$} of the part $i$ in $\pi$.  Thus setting $u_i(\pi)=\lfloor \frac{s_i(\pi)}{k} \rfloor$, we have uniquely that
\begin{equation}\label{resmultdef}
s_i(\pi) = k u_i(\pi)+ r_i(\pi),
\end{equation}
with $0\leq r_i(\pi) \leq k-1$.  Observe that for any $i\geq 1$ and $0\leq t \leq k-1$, we can interpret a pair of choices from the product
\[
(1+q^{bi}+q^{2bi}+ \cdots + q^{(t-1)bi} + wq^{tbi} + wq^{(t+1)bi} + \cdots + wq^{(k-1)bi})(1+zq^{kbi}+zq^{2kbi} + \cdots)
\]
as determining a multiplicity for the part $bi$ in a partition $\pi$ in terms of $r_{bi}(\pi)$ and $u_{bi}(\pi)$.  Moreover,
\[
(1+q^{bi}+q^{2bi}+ \cdots + q^{(t-1)bi} + wq^{tbi} + wq^{(t+1)bi} + \cdots + wq^{(k-1)bi}) = \frac{(1-q^{tbi})+w(q^{tbi}-q^{kbi})}{1-q^{bi}},
\]
so we can interpret
\begin{equation}\label{resmult}
\frac{((1-z)q^{kb};q^{kb})_\infty}{(q^{kb};q^{kb})_\infty(q^{b};q^{b})_\infty} \prod_{i\geq 1}((1-q^{tbi})+w(q^{tbi}-q^{kbi}))
\end{equation}
as generating partitions into parts $\equiv 0 \pmod{b}$ where $q$ is tracking the size of $\pi$, $z$ is tracking the number of different parts $\equiv 0 \pmod{b}$ that appear in $\pi$ at least $k$ times, and $w$ is tracking the number of different parts $bi$ in $\pi$ with residual multiplicity at least $t$.

For $0\leq t \leq k-1$, we define $D_{j,k,b,t}(m,n)$ to be the number of partitions counted by $D_{j,k,b}(n)$ that have $m$ different parts $\equiv 0 \pmod{b}$ with residual multiplicity modulo $k$ at least $t$.  Then it follows from \eqref{eq:ndivtotal} and \eqref{resmult} that
\begin{equation}\label{eq:Dzwq-t}
\sum_{n,j,m \geq 0} D_{j,k,b,t}(m,n)z^j w^m q^n = \frac{((1-z)q^{kb};q^{kb})_\infty}{(q;q)_\infty(q^{kb};q^{kb})_\infty} \prod_{i\geq 1}((1-q^{tbi})+w(q^{tbi}-q^{kbi})).
\end{equation}

\section{Combinatorial Proof of Theorem \ref{OD24}} \label{sec:FHgen}
We now provide a combinatorial proof of Theorem \ref{OD24} by constructing a bijective correspondence between $\mathcal{O}_{j,k,b}(n)$ and $\mathcal{D}_{j,k,b}(n)$. 

\begin{proof}[Bijective proof of Theorem \ref{OD24}]
Since the result is trivially true when $k=1$, throughout we assume $k\geq 2$.

We will use the following notation.  For a partition $\pi$, we will write $\pi = 1^{s_1} 2^{s_2} \cdots \ell^{s_\ell}$ to denote that $s_i=s_i(\pi)$ the (nonnegative) multiplicity of $i$ in $\pi$.  As in \eqref{resmultdef}, we write $s_i = r_i + ku_i$, where $r_i=r_i(\pi)$ is the residual multiplicity modulo $k$ of the part $i$ in $\pi$.  We further break down $s_i$ into its base $k$ representation by writing $s_i = r_i + u_{i1}k + u_{i2}k^2 + \cdots + u_{iw}k^w$ with $0\leq u_{ij} \leq k-1$.  Lastly, when $i\equiv 0 \pmod{b}$, we define $\alpha_i\geq 0$ such that $i=bk^{\alpha_i}m_i$ where $k\nmid m_i$.

We now define a map $\varphi: \mathcal{O}_{j,k,b}(n) \rightarrow \mathcal{D}_{j,k,b}(n)$ as follows.  For $\pi \in \mathcal{O}_{j,k,b}(n)$, define $\varphi(\pi)$ by modifying the occurrences of each part $i$ of $\pi$ via
\[
i^{s_i} \mapsto 
\begin{cases} 
(\frac{i}{k})^{ks_i} & \text{ if } kb \mid i, \\
i^{r_{i}}, (k i)^{u_{i1}}, (k^2 i)^{u_{i2}}, \ldots, (k^w i)^{u_{iw}} & \text{ if } b\mid i, \, kb\nmid i, \\
i^{s_i} & \text{ if } b \nmid i.
\end{cases}
\]
To show that $\varphi(\pi) \in \mathcal{D}_{j,k,b}(n)$ we fist observe that $\varphi(\pi)$ is again a partition of $n$ as in each case, the size $s_i i$ of the parts $i^{s_i}$ remains unchanged.  We next observe that since $\pi \in \mathcal{O}_{j,k,b}(n)$, there are exactly $j$ parts of $\pi$ that are divisible by $kb$.  Thus from the first case of the definition of $\varphi$, we obtain $j$ different parts of $\varphi(\pi)$ that are both divisible by $b$ and occur at least $k$ times.  Parts arising from the other cases do not satisfy these conditions because in the second case each of the parts occur at most $k-1$ times, and in the third case parts are not divisible by $b$.  However, we need to check whether there could be overlap between the outputs of the second cases which combine to create a new part both divisible by $b$ and occurring at least $k$ times.  If in the second case we have $\alpha, \beta \geq 0$ and $h\neq i$ such that $k^\alpha h = k^\beta  i$, then $\alpha\neq \beta$ so without loss of generality, $\alpha>\beta$ and $k^{\alpha-\beta} h = i$. But then since $b\mid h$ we have $kb\mid i$ which is a contradiction.  Thus we conclude $\varphi(\pi) \in \mathcal{D}_{j,k,b}(n)$.

We next show that $\varphi$ is bijective by constructing an inverse.  Define the map $\psi: \mathcal{D}_{j,k,b}(n) \rightarrow \mathcal{O}_{j,k,b}(n)$ as follows.  For $\pi \in \mathcal{D}_{j,k,b}(n)$, define $\psi(\pi)$ by modifying the occurrences of each part $i$ of $\pi$ via
\[
i^{s_i} \mapsto 
\begin{cases} 
(ki)^{u_i}, (\frac{i}{k^{\alpha_i}})^{k^{\alpha_i}r_i} & \text{ if } b \mid i, \\
i^{s_i} & \text{ if } b \nmid i.
\end{cases}
\]
To show that $\psi(\pi) \in \mathcal{O}_{j,k,b}(n)$ we fist observe that $\psi(\pi)$ is again a partition of $n$ as in each case, the size $s_i i$ of the parts $i^{s_i}$ remains unchanged.  We next observe that since $\pi \in \mathcal{D}_{j,k,b}(n)$, there are exactly $j$ parts of $\pi$ that are both divisible by $b$ and occurring at least $k$ times.  Thus from the first output of the first case of the definition of $\psi$, we obtain $j$ different parts of $\psi(\pi)$ that are divisible by $kb$.  Moreover all additional parts created are not divisible by $kb$.  Thus  $\psi(\pi) \in \mathcal{O}_{j,k,b}(n)$.

To see that $\varphi \circ \psi (\pi) = \pi$ for any $\pi \in \mathcal{D}_{j,k,b}(n)$, observe that if $i$ is a part in $\pi$ divisible by $b$ with multiplicity $s_i=r_i+ku_i$, then $\psi(i^{s_i}) = (ki)^{u_i}, (\frac{i}{k^{\alpha_i}})^{k^{\alpha_i}r_i}$.  Since $kb \mid ki$, $kb\nmid \frac{i}{k^{\alpha_i}}$, and $k^{\alpha_i}r_i$ is already written base $k$, it follows that $\varphi((ki)^{u_i}) = i^{ku_i}$, and $\varphi((\frac{i}{k^{\alpha_i}})^{k^{\alpha_i}r_i}) = i^{r_i}$, which recovers $i^{s_i}$.  

To see that $\psi \circ \varphi (\pi) = \pi$ for any $\pi \in \mathcal{O}_{j,k,b}(n)$, we first observe that if $i$ is a part in $\pi$ divisible by $kb$ with multiplicity $s_i$, then $\varphi(i^{s_i}) = (\frac{i}{k})^{ks_i}$, and then $ks_i=0+ks_i$, so $\psi((\frac{i}{k})^{ks_i}) = i^{s_i}$.  Next we consider any part $i$ for which $kb\nmid i$ but $b\mid i$.  Then $\varphi(i^{s_i}) = i^{r_{i}}, (k i)^{u_{i1}}, (k^2 i)^{u_{i2}}, \ldots, (k^w i)^{u_{iw}}$.  Since $0\leq r_i, u_{ij} \leq k-1$, and $k^ji=bk^jm$ with $k\nmid m$, it follows that $\psi(i^{r_i})=i^{r_i}$, and $\psi((k^ji)^{u{ij}}) = i^{k^ju_{ij}}$ for each $u_{ij}$.  So together, $\psi$ recovers $i^{r_i + u_{i1}k + u_{i2}k^2 + \cdots + u_{iw}k^w} = i^{s_i}$.

We have shown that $\varphi$ and $\psi$ are inverses, and thus $|\mathcal{O}_{j,k,b}(n)| = |\mathcal{D}_{j,k,b}(n)|$ as desired.
\end{proof}

As an example, consider the partition $\pi=4^56^112^718^824^936^1$.  We see that $\pi\in D_{3,2,6}(506)$ as there are exactly three parts, $12$, $18$, and $24$, which are both divisible by $6$ and occur at least twice.  To apply $\psi$ we first observe that $\alpha_6=\alpha_{18}=0$, $\alpha_{12}=\alpha_{36}=1$, and $\alpha_{24}=2$.  Thus under $\psi$, $6^1\mapsto 6^1$; $12^7\mapsto 24^3, 6^2$; $18^8\mapsto 36^4$; $24^9\mapsto 48^4, 6^4$; and $36^1\mapsto 18^2$.  We obtain
\[
\psi(\pi) = 4^56^718^224^336^448^4,
\]
which is in $O_{3,2,6}(506)$ as there are exactly three parts, $24$, $36$, and $48$, which are divisible by $12$.  Going the other direction, for parts divisible by $12$ we observe that $24^3\mapsto 12^6$; $36^4\mapsto 18^8$; $48^4\mapsto 24^8$.  Moreover, writing $2$ and $7$ base $2$ gives that $18^2\mapsto 36^1$; $6^7\mapsto 6^1, 12^1, 24^1$.  Together this gives that $\varphi(\psi(\pi))=\pi$.

In the following table, we show the complete bijective correspondence by row for the eight partitions counted by $O_{3,2,2}(29)$ and $D_{3,2,2}(29)$.  \\


\begingroup
\renewcommand{\arraystretch}{1.2}
\[
        \begin{tabular}{| c  c  c |}
            \hline
            $\mathcal{O}_{3,2,2}(29)$ &  & $\mathcal{D}_{3,2,2}(29)$ \\
            \hline
            $1^14^18^116^1$ & $\longleftrightarrow$ & $1^12^24^28^2$ \\
            $4^15^18^112^1$ & $\longleftrightarrow$ & $2^24^25^16^2$ \\
            $1^14^28^112^1$ & $\longleftrightarrow$ & $1^12^44^26^2$ \\
            $2^13^14^18^112^1$ & $\longleftrightarrow$ & $2^33^14^26^2$ \\
            $1^23^14^18^112^1$ & $\longleftrightarrow$ & $1^22^23^14^26^2$ \\
            $1^12^24^18^112^1$ & $\longleftrightarrow$ & $1^12^24^36^2$ \\
            $1^32^14^18^112^1$ & $\longleftrightarrow$ & $1^32^34^26^2$ \\
            $1^54^18^112^1$ & $\longleftrightarrow$ & $1^52^24^26^2$ \\
            \hline
        \end{tabular}\medskip
\]
\endgroup

\section{A Beck-Type Companion Identity for Theorem \ref{OD24}} \label{sec:Beck}

In this section we prove Theorem \ref{Ojkb/Djkb-beck} in two ways using $q$-series.  The first is a direct proof using \eqref{eq:Ozwq} and \eqref{eq:Dzwq}, and the second is via a modular refinement using \eqref{eq:Ozwq-t}, \eqref{eq:Ozwq-0}, and \eqref{eq:Dzwq-t}, following an approach of Ballantine and Welch \cite[Thm. 4]{ballantinewelch2021becktype}.  We note that Ballantine and Welch also gave a combinatorial proof of \cite[Thm. 4]{ballantinewelch2021becktype} using a bijection of Xiong and Keith \cite{xiongkeith2020eulers}.  It would be interesting to see if their approach could be extended to provide a combinatorial proof of Theorem \ref{Ojkb/Djkb-beck} as well.

In general, given a partition function $a(m,n)$ which counts partitions of $n$ satisfying some condition $(\star)$ into $m$ parts satisfying some condition $(\heartsuit)$, we can access the total number of parts in partitions counted by $a(m,n)$ for all $m$ using a derivative.  In particular, if 
\[
G(w,q) = \sum_{n,m\geq 0} a(m,n)w^mq^n,
\]
where $q$ tracks the size of the partitions counted by $a(m,n)$ and $w$ tracks the number of parts satisfying condition $(\heartsuit)$, then
\begin{equation}\label{eq:ma}
\frac{\partial}{\partial w} G(w,q) \Bigr|_{w=1} = \sum_{n\geq 0} \sum_{m \geq 0} m \cdot a(m,n) q^n, 
\end{equation}
where the coefficient $\sum_{m \geq 0} m \, a(m,n)$ of $q^n$ in \eqref{eq:ma} is the total number of parts satisfying $(\heartsuit)$ in partitions counted by $a(m,n)$ for all $m$.  As our generating functions are all infinite products, we also observe
\begin{equation}\label{eq:prodrule}
\frac{\partial}{\partial w} \prod_{i\geq 1} f_i(w) \Bigr|_{w=1} = \Big( \prod_{i\geq 1} f_i(1) \Big) \sum_{i\geq 1} \frac{\frac{df_i}{dw} |_{w=1}}{f_i(1)}. 
\end{equation}

\begin{proof}[Proof of Theorem \ref{Ojkb/Djkb-beck} via generating functions.]
From \eqref{eq:ma}, it follows that $E(O_{j,k,b}(n),D_{j,k,b}(n))$ is the coefficient of $q^n$ in 
\begin{equation}\label{eq:ODexcess}
\frac{\partial}{\partial w} \sum_{n,j,m \geq 0} O_{j,k,b}(m,n)z^j w^m q^n \Bigr|_{w=1} - \frac{\partial}{\partial w} \sum_{n,j,m \geq 0} D_{j,k,b}(m,n)z^j w^m q^n \Bigr|_{w=1}.
\end{equation}
Setting $f_i(w) = \frac{1-(1-z)wq^{kbi}}{1-wq^i}$ in \eqref{eq:prodrule}, we have from \eqref{eq:Ozwq} that 
\[
\frac{\partial}{\partial w} \sum_{n,j,m \geq 0} O_{j,k,b}(m,n)z^j w^m q^n \Bigr|_{w=1} = \frac{((1-z)q^{kb};q^{kb})_\infty}{(q;q)_\infty} \cdot \sum_{i\geq 1} \frac{q^i-(1-z)q^{kbi}}{(1-q^i)(1-(1-z)q^{kbi})}.
\]
Similarly, setting $f_i(w) = \frac{1-(1-z)w^kq^{kbi}}{1-wq^i}$ in \eqref{eq:prodrule}, we have from \eqref{eq:Dzwq} that
\begin{multline*}
\frac{\partial}{\partial w} \sum_{n,j,m \geq 0} D_{j,k,b}(m,n)z^j w^m q^n \Bigr|_{w=1} \\
= \frac{((1-z)q^{kb};q^{kb})_\infty}{(q;q)_\infty} \cdot \sum_{i\geq 1} \frac{q^i-k(1-z)q^{kbi}+(k-1)(1-z)q^{(kb+1)i}}{(1-q^i)(1-(1-z)q^{kbi})}.
\end{multline*}
Then using \eqref{(j+1)Ogen}, \eqref{eq:ODexcess} becomes
\begin{multline*}
\frac{((1-z)q^{kb};q^{kb})_\infty}{(q;q)_\infty} \cdot \sum_{i\geq 1} \frac{(k-1)(1-z)q^{kbi}-(k-1)(1-z)q^{(kb+1)i}}{(1-q^i)(1-(1-z)q^{kbi})} \\
= (k-1)(1-z) \sum_{n,j \geq 0} (j+1)O_{j+1,k,b}(n)z^j q^n,
\end{multline*}
and so
\begin{multline}\label{eq:1-zconv}
\frac{\partial}{\partial w} \sum_{n,j,m \geq 0} O_{j,k,b}(m,n)z^j w^m q^n \Bigr|_{w=1} - \frac{\partial}{\partial w} \sum_{n,j,m \geq 0} D_{j,k,b}(m,n)z^j w^m q^n \Bigr|_{w=1} \\
= (k-1)(1-z) \sum_{n,j \geq 0} (j+1)O_{j+1,k,b}(n)z^j q^n = (k-1) \sum_{n,j \geq 0}((j+1)O_{j+1,k,b}(n) - jO_{j,k,b}(n))z^jq^n,
\end{multline}
and the desired result follows from comparing the coefficients of $z^jq^n$.
\end{proof}

We next show that Theorem \ref{Ojkb/Djkb-beck} can also be proved via a modular refinement, modifying the method of Ballantine and Welch \cite{ballantinewelch2021becktype}.  Recall that for a partition $\pi$, we define the residual multiplicity modulo $k$ of the part $i$ in $\pi$, denoted by $r_i(\pi)$, by \eqref{resmultdef}. 
Further, recall $\ell_{kb,tb}(\pi)$ is the total number of parts in $\pi$ that are $\equiv tb \pmod{kb}$, and define $\bar{\ell}_{b,0,t}(\pi)$ to be the number of different parts in $\pi$ that are $\equiv 0 \pmod{b}$ and satisfy $r_i(\pi)\geq t$.  Define
\begin{equation}\label{Ejkbtdef}
E_{j,k,b,t}(n) = \sum_{\pi \in \mathcal{O}_{j,k,b}(n)} (\ell_{kb,tb}(\pi) - \ell_{kb,0}(\pi)) - \sum_{\pi \in \mathcal{D}_{j,k,b}(n)} \bar{\ell}_{b,0,t}(\pi).
\end{equation}
We prove the following.

\begin{proposition}\label{ojkb/djkb-beck-refinement}
For integers $n,j \geq 0$, $k \geq 2$, $b \geq 1$ and $1 \leq t \leq k-1$, we have 
\[
E_{j,k,b,t}(n) = (j+1)O_{j+1,k,b}(n) - jO_{j,k,b}(n) = (j+1)D_{j+1,k,b}(n) - jD_{j,k,b}(n).
\]
\end{proposition}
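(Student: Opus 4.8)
The plan is to mirror the generating-function proof of Theorem \ref{Ojkb/Djkb-beck}, but now working with the modular refinements \eqref{eq:Ozwq-t}, \eqref{eq:Ozwq-0}, and \eqref{eq:Dzwq-t}. By the definitions of $\ell_{kb,tb}$, $\ell_{kb,0}$, and $\bar{\ell}_{b,0,t}$, the three sums appearing in \eqref{Ejkbtdef} are exactly the total counts tracked by the $w$-exponent in $O_{j,k,b,t}(m,n)$, $O_{j,k,b,0}(m,n)$, and $D_{j,k,b,t}(m,n)$, respectively. Hence by \eqref{eq:ma}, $E_{j,k,b,t}(n)$ is the coefficient of $z^jq^n$ in
\[
\frac{\partial}{\partial w}\Big[ (\text{GF of } O_{j,k,b,t}) - (\text{GF of } O_{j,k,b,0}) - (\text{GF of } D_{j,k,b,t})\Big]\Big|_{w=1}.
\]
First I would compute each of the three $w$-derivatives at $w=1$ using the logarithmic product rule \eqref{eq:prodrule}. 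A pleasant feature is that each derivative produces the same prefactor $\frac{((1-z)q^{kb};q^{kb})_\infty}{(q;q)_\infty}$ times an explicit sum over $i$. After factoring out this common prefactor, the contributions of \eqref{eq:Ozwq-t} and \eqref{eq:Dzwq-t} are free of $z$, while that of \eqref{eq:Ozwq-0} splits into a $z$-free sum $\sum_{i\ge 1}\frac{q^{kbi}}{1-q^{kbi}}$ and the $z$-bearing sum $\sum_{i\ge 1}\frac{(1-z)q^{kbi}}{1-(1-z)q^{kbi}}$.

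The key observation is then that, by \eqref{(j+1)Ogen}, this surviving $z$-bearing contribution $\frac{((1-z)q^{kb};q^{kb})_\infty}{(q;q)_\infty}\sum_{i\ge 1}\frac{(1-z)q^{kbi}}{1-(1-z)q^{kbi}}$ equals exactly $(1-z)\sum_{n,j}(j+1)O_{j+1,k,b}(n)z^jq^n$. Thus everything hinges on showing that the collected $z$-free terms cancel. After the obvious cancellation of the two $\sum_{i\ge 1}\frac{q^{kbi}}{1-q^{kbi}}$ contributions (coming from \eqref{eq:Ozwq-0} and from splitting the numerator in \eqref{eq:Dzwq-t}), what remains to verify is the single $q$-series identity
\[
\sum_{i\ge 0}\frac{q^{tb+kbi}}{1-q^{tb+kbi}} = \sum_{i\ge 1}\frac{q^{tbi}}{1-q^{kbi}}.
\]

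I expect this last identity to be the main obstacle, though it is elementary in nature. I would prove it by comparing coefficients of $q^N$: the left side counts divisors $p$ of $N$ with $p \equiv tb \pmod{kb}$, while the right side counts the pairs realizing $bi(t+ek)=N$ with $i\ge 1$, $e\ge 0$. Both vanish unless $b \mid N$, and when $N=bM$ both reduce to the number of divisors $c$ of $M$ with $c \equiv t \pmod k$, where the constraint $1 \le t \le k-1$ is what guarantees that the smallest positive residue class representative is $t$ itself (so no spurious small divisors are missed or double-counted). Careful re-indexing and sign bookkeeping across the three derivatives is the other place where errors could creep in, so I would track the exponents $tb+kbi$ versus $kb(i+1)$ explicitly.

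Finally, combining these facts shows that the bracketed derivative equals $(1-z)\sum_{n,j}(j+1)O_{j+1,k,b}(n)z^jq^n$. Extracting the coefficient of $z^jq^n$ and using the telescoping identity $(1-z)\sum_j(j+1)O_{j+1,k,b}(n)z^j = \sum_j\big((j+1)O_{j+1,k,b}(n) - jO_{j,k,b}(n)\big)z^j$, exactly as in the proof of Theorem \ref{Ojkb/Djkb-beck}, yields $E_{j,k,b,t}(n) = (j+1)O_{j+1,k,b}(n) - jO_{j,k,b}(n)$. The second equality, with $D$ in place of $O$, then follows immediately from Theorem \ref{OD24}.
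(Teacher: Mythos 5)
Your proposal is correct and follows essentially the same route as the paper: differentiate the three refined generating functions \eqref{eq:Ozwq-t}, \eqref{eq:Ozwq-0}, \eqref{eq:Dzwq-t} at $w=1$ via \eqref{eq:prodrule}, observe that the $z$-free Lambert-series pieces cancel (the identity $\sum_{i\geq 0}\frac{q^{tb+kbi}}{1-q^{tb+kbi}}=\sum_{i\geq 1}\frac{q^{tbi}}{1-q^{kbi}}$ you single out is exactly the re-summation the paper performs by interchanging the two geometric series, which your divisor-counting argument reproves), and identify the surviving $z$-bearing term with $(1-z)\sum(j+1)O_{j+1,k,b}(n)z^jq^n$ via \eqref{(j+1)Ogen}. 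The sign bookkeeping and the final telescoping step match the paper's \eqref{eq:target} and \eqref{eq:1-zconv}, so no further comment is needed.
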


\begin{proof}
We first observe that for any $0 \leq t \leq k-1$,
\begin{align} \label{O-tcount}
\sum_{\pi \in \mathcal{O}_{j,k,b}(n)} \ell_{kb,tb}(\pi) & = \sum_{m\geq 0} m\cdot O_{j,k,b,t}(m,n), \\
\sum_{\pi \in \mathcal{D}_{j,k,b}(n)} \bar{\ell}_{b,0,t}(\pi) & = \sum_{m\geq 0} m\cdot D_{j,k,b,t}(m,n). \nonumber
\end{align}
Hence it follows from \eqref{eq:ma} that
\begin{multline*}
\sum_{n,j\geq0}E_{j,k,b,t}(n)z^jq^n = 
\frac{\partial}{\partial w} \sum_{n,j,m \geq 0} (O_{j,k,b,t}(m,n) - O_{j,k,b,0}(m,n) - D_{j,k,b,t}(m,n))z^j w^m q^n \Bigr|_{w=1}.
\end{multline*}
Thus using \eqref{(j+1)Ogen} and \eqref{eq:1-zconv} it suffices to prove that
\begin{multline}\label{eq:target}
\frac{\partial}{\partial w} \sum_{n,j,m \geq 0} (O_{j,k,b,t}(m,n) - O_{j,k,b,0}(m,n) - D_{j,k,b,t}(m,n))z^j w^m q^n \Bigr|_{w=1} \\
= (1-z) \frac{((1-z)q^{kb};q^{kb})_\infty}{(q;q)_\infty} \sum_{i\geq 1} \frac{q^{kbi}}{1-(1-z)q^{kbi}}.
\end{multline}

Let $1\leq t \leq k-1$.  Setting $f_i(w) = (1-wq^{tb+kbi})^{-1}$ in \eqref{eq:prodrule}, we obtain from \eqref{eq:Ozwq-t} that 
\[
\frac{\partial}{\partial w} \sum_{n,j,m \geq 0} O_{j,k,b,t}(m,n) z^jw^mq^n \Bigr|_{w=1} = \frac{((1-z)q^{kb};q^{kb})_\infty}{(q;q)_\infty} \sum_{i\geq 0}\frac{q^{tb+kbi}}{1-q^{tb+kbi}}.
\]
Expanding $1/(1-q^{tb+kbi})$ as a series yields that
\[
\sum_{i\geq 0}\frac{q^{tb+kbi}}{1-q^{tb+kbi}} = \sum_{i\geq 0} \sum_{j\geq 1} q^{(tb+kbi)j} = \sum_{j\geq 1} q^{tbj} \sum_{i\geq 0} q^{kbij} = \sum_{j\geq 1} \frac{q^{tbj}}{1-q^{kbi}}.
\]
Thus in fact,
\begin{equation}\label{O-tderiv}
\frac{\partial}{\partial w} \sum_{n,j,m \geq 0} O_{j,k,b,t}(m,n) z^jw^mq^n \Bigr|_{w=1} = \frac{((1-z)q^{kb};q^{kb})_\infty}{(q;q)_\infty} \sum_{i\geq 1} \frac{q^{tbi}}{1-q^{kbi}}.
\end{equation}

Setting $f_i(w) = \frac{1-(1-z)wq^{kbi}}{1-wq^{kbi}}$ in \eqref{eq:prodrule}, we obtain directly from \eqref{eq:Ozwq-0} that
\begin{equation}\label{O-0deriv}
\frac{\partial}{\partial w} \sum_{n,j,m \geq 0} O_{j,k,b,0}(m,n) z^jw^mq^n \Bigr|_{w=1} = \frac{((1-z)q^{kb};q^{kb})_\infty}{(q;q)_\infty} \sum_{i\geq 1}\frac{zq^{kbi}}{(1-q^{kbi})(1-(1-z)q^{kbi})},
\end{equation}
and setting $f_i(w) = 1-q^{tbi}+w(q^{tbi}-q^{kbi})$ in \eqref{eq:prodrule}, we obtain from \eqref{eq:Dzwq-t} that
\begin{equation}\label{D-tderiv}
\frac{\partial}{\partial w} \sum_{n,j,m \geq 0} D_{j,k,b,t}(m,n) z^jw^mq^n \Bigr|_{w=1} = \frac{((1-z)q^{kb};q^{kb})_\infty}{(q;q)_\infty} \sum_{i\geq 1}\frac{q^{tbi}-q^{kbi}}{1-q^{kbi}}.
\end{equation}

Subtracting \eqref{O-0deriv} and \eqref{D-tderiv} from \eqref{O-tderiv} gives
\begin{multline*}
\frac{((1-z)q^{kb};q^{kb})_\infty}{(q;q)_\infty} \sum_{i\geq 1} \left( \frac{q^{kbi}}{1-q^{kbi}} - \frac{zq^{kbi}}{(1-q^{kbi})(1-(1-z)q^{kbi})} \right) \\
= \frac{((1-z)q^{kb};q^{kb})_\infty}{(q;q)_\infty} \sum_{i\geq 1} \frac{(1-z)q^{kbi}}{1-(1-z)q^{kbi}},
\end{multline*}
which establishes \eqref{eq:target}.
\end{proof}

We now prove Theorem \ref{Ojkb/Djkb-beck} using Proposition \ref{ojkb/djkb-beck-refinement}. 

\begin{proof}[Proof of Theorem \ref{Ojkb/Djkb-beck} via a modular refinement]
Let $k \geq 2$.  Note that we have seen in our combinatorial proof of Theorem \ref{OD24} that the total number of parts $\not\equiv 0 \pmod{b}$ in the partitions counted by $O_{j,k,b}(n)$ and $D_{j,k,b}(n)$ are equal, as they remain fixed under the bijection.  Thus,
\[
E(O_{j,k,b}(n),D_{j,k,b}(n))=\sum_{\pi \in \mathcal{O}_{j,k,b}(n)} \ell(\pi) - \!\!\! \sum_{\pi \in \mathcal{D}_{j,k,b}(n)} \ell(\pi)=\sum_{\pi \in \mathcal{O}_{j,k,b}(n)} \ell_{b,0}(\pi) - \!\!\! \sum_{\pi \in \mathcal{D}_{j,k,b}(n)} \ell_{b,0}(\pi).
\]

From Proposition \ref{ojkb/djkb-beck-refinement}, we see that for $1\leq t \leq k-1$, $E_{j,k,b,t}(n)$ doesn't depend on $t$, so $\sum_{t=1}^{k-1}E_{j,k,b,t}(n) = (k-1)E_{j,k,b,t}(n)$ and it suffices to show that 
\begin{equation}\label{eq:reftarget1}
\sum_{t=1}^{k-1}E_{j,k,b,t}(n) = \sum_{\pi \in \mathcal{O}_{j,k,b}(n)} \ell_{b,0}(\pi) - \!\!\! \sum_{\pi \in \mathcal{D}_{j,k,b}(n)} \ell_{b,0}(\pi).
\end{equation}

Observe that $\sum_{t=1}^{k-1}\ell_{kb,tb}(\pi) = \ell_{b,0}(\pi) - \ell_{kb,0}(\pi)$.  Thus from \eqref{Ejkbtdef}, we obtain
\[
\sum_{t=1}^{k-1}E_{j,k,b,t}(n) 
= \!\!\! \sum_{\pi \in \mathcal{O}_{j,k,b}(n)}\ell_{b,0}(\pi)-k \!\!\!  \sum_{\pi \in \mathcal{O}_{j,k,b}(n)}\ell_{kb,0}(\pi) - \!\!\! \sum_{\pi \in \mathcal{D}_{j,k,b}(n)} \sum_{t=1}^{k-1} \bar{\ell}_{b,0,t}(\pi),
\]
and we can see from \eqref{eq:reftarget1} that it now suffices to show 
\begin{equation}\label{eq:reftarget2}
\sum_{\pi \in \mathcal{D}_{j,k,b}(n)} \ell_{b,0}(\pi) = k \!\!\!  \sum_{\pi \in \mathcal{O}_{j,k,b}(n)}\ell_{kb,0}(\pi) + \!\!\! \sum_{\pi \in \mathcal{D}_{j,k,b}(n)} \sum_{t=1}^{k-1} \bar{\ell}_{b,0,t}(\pi).
\end{equation}

Recall that $\bar{\ell}_{b,0,t}(\pi)$ counts the number of different parts of the form $ib$ in $\pi$ such that $r_{ib}(\pi)\geq t$.  Thus for a given partition $\pi \in \mathcal{D}_{j,k,b}(n)$, 
\begin{equation}\label{ellbar}
\sum_{t=1}^{k-1} \bar{\ell}_{b,0,t}(\pi) = \sum_{i\geq 0} r_{ib}(\pi),
\end{equation}
as each $ib$ will contribute $1$ to the sum on the left side for exactly $r_{ib}(\pi)$ values of $t$.  Furthermore, using \eqref{resmultdef}, we have that
\[
\ell_{b,0}(\pi) = \sum_{i\geq 0} s_{ib}(\pi) = k \sum_{i\geq 0} u_{ib}(\pi) +  \sum_{i\geq 0} r_{ib}(\pi).
\]
Thus by \eqref{ellbar},
\[
\sum_{\pi \in \mathcal{D}_{j,k,b}(n)} \ell_{b,0}(\pi) = k \!\!\! \sum_{\pi \in \mathcal{D}_{j,k,b}(n)} \sum_{i\geq 0} u_{ib}(\pi)  + \!\!\! \sum_{\pi \in \mathcal{D}_{j,k,b}(n)}\sum_{t=1}^{k-1} \bar{\ell}_{b,0,t}(\pi),
\]
so now we have reduced \eqref{eq:reftarget2} to showing that
\begin{equation}\label{target3}
\sum_{\pi \in \mathcal{D}_{j,k,b}(n)} \sum_{i\geq 0} u_{ib}(\pi) =  \sum_{\pi \in \mathcal{O}_{j,k,b}(n)}\ell_{kb,0}(\pi).
\end{equation}

Let $\overline{D}_{j,k,b}(m,n)$ be the number of partitions counted by ${D}_{j,k,b}(n)$ such that $u_{ib}(\pi) = m$.  Then it follows that
\[
\sum_{\pi \in \mathcal{D}_{j,k,b}(n)} \sum_{i\geq 0} u_{ib}(\pi) = \sum_{m\geq 0} m \cdot \overline{D}_{j,k,b}(m,n),
\]
and as we saw in \eqref{O-tcount}, 
\[
\sum_{\pi \in \mathcal{O}_{j,k,b}(n)} \ell_{kb,0}(\pi) = \sum_{m\geq 0} m \cdot O_{j,k,b,0}(m,n).
\]
Thus if we show that
\begin{equation}\label{target4}
\sum_{n,j,m\geq 0} O_{j,k,b,0}(m,n) z^j w^m q^n = \sum_{n,j,m\geq 0} \overline{D}_{j,k,b}(m,n) z^j w^m q^n,
\end{equation}
then \eqref{target3} follows from \eqref{eq:ma}.  From \eqref{eq:Ozwq-0} we have 
\[
\sum_{n,j,m \geq 0} O_{j,k,b,0}(m,n)z^j w^m q^n =  \frac{((1-z)wq^{kb};q^{kb})_\infty(q^{kb};q^{kb})_\infty }{(wq^{kb};q^{kb})_\infty(q;q)_\infty},
\]
so we need to interpret this in terms of partitions counted by $D_{j,k,b}(n)$.  Observe that 
\begin{multline*}
\frac{((1-z)wq^{kb};q^{kb})_\infty (q^{kb};q^{kb})_\infty}{(wq^{kb};q^{kb})_\infty(q;q)_\infty} 
= \frac{(q^{b};q^{b})_\infty}{(q;q)_\infty} \cdot \frac{(q^{kb};q^{kb})_\infty}{(q^{b};q^{b})_\infty} \cdot \frac{((1-z)wq^{kb};q^{kb})_\infty}{(wq^{kb};q^{kb})_\infty} \\
= \frac{(q^{b};q^{b})_\infty}{(q;q)_\infty} \prod_{i\geq 1} (1 + q^{bi} + q^{2bi} + \cdots + q^{(k-1)bi}) (1 + zwq^{k(bi)} + zw^2q^{2k(bi)} + zw^3 q^{3k(bi)} + \cdots ).
\end{multline*}
The product on the far left generates parts not divisible by $b$ which are unrestricted.  For the product in the center, we can interpret $q^{nbi}$ for $0\leq n\leq k-1$ as generating $n$ copies of part $bi$, and in the product on the right we can interpret $q^{nk(bi)}$ for $n\geq 0$ as generating $nk$ copies of the part $bi$.  Hence, a pair of choices, one from the product in the center and one from the product on the right, corresponds to a choice of multiplicity of the part $bi$ in a partition $\pi$ in terms of $r_{bi}(\pi)$ and $u_{bi}(\pi)$.  It follows that $z$ will track the number of different parts $bi$ that appear $\geq k$ times, and $w$ will track $\sum_{i\geq 1} u_{ib}(\pi)$ as desired.  Thus \eqref{target4} is obtained which finishes the proof.
\end{proof}

\end{document}